\numberwithin{equation}{section}
\theoremstyle{plain}
\newtheorem{theorem}{Theorem}[section]
\newtheorem{thm}[theorem]{Theorem}
\newtheorem{corollary}[theorem]{Corollary}
\newtheorem{proposition}[theorem]{Proposition}
\newtheorem{lemma}[theorem]{Lemma}
\theoremstyle{remark}
\newtheorem{remark}[theorem]{Remark}
\newtheorem{example}[theorem]{Example}
\theoremstyle{definition}
\newcommand{\BB}{\mathcal{B}}
\newcommand{\CC}{\mathcal{C}}
\newcommand{\DD}{\mathcal{D}}
\newcommand{\QQ}{\mathcal{Q}}
\newcommand{\R}{\mathbb{R}}
\newcommand{\N}{\mathbb{N}}
\newcommand{\iii}{\mathtt{i}}
\newcommand{\jjj}{\mathtt{j}}
\newcommand{\eps}{\varepsilon}
\newcommand{\roo}{\varrho}
\DeclareMathOperator{\diam}{diam}
\DeclareMathOperator{\spt}{spt}
\DeclareMathOperator{\essinf}{ess\,inf}
\DeclareMathOperator{\udimloc}{\overline{dim}_{loc}}
\DeclareMathOperator{\ldimloc}{\underline{dim}_{loc}}
\DeclareMathOperator{\ldimh}{\underline{dim}_H}
\DeclareMathOperator\dimcor{dim_{cor}}
\newcommand{\dd}{\,\mathrm{d}}
\renewcommand{\iint}{\int\hspace{-0.1in}\int}
\newenvironment{labeledlist}[2][\unskip]
{ 
  
  \begin{enumerate} }
{ \end{enumerate} }
\begin{document}

\title{A note on correlation and local dimensions}

\author{Jiaojiao Yang}
\address[Jiaojiao Yang]{Department of Mathematics, South China University of
Technology, Guangzhou, 510641, P.~R. China}
\email{y.jiaojiao@mail.scut.edu.cn}

\author{Antti K\"aenm\"aki}
\address[Antti K\"aenm\"aki]{Department of Mathematics and Statistics \\
         P.O.\ Box 35 (MaD) \\
         FI-40014 University of Jyv\"askyl\"a \\
         Finland}
\email{antti.kaenmaki@jyu.fi}

\author{Min Wu}
\address[Min Wu]{Department of Mathematics, South China University of
Technology, Guangzhou, 510641, P.~R. China}
\email{wumin@scut.edu.cn}

\thanks{MW is the corresponding author. The authors are listed in
non-alphabetical order only because the regulations of the SCUT require that the PhD student is the first-named author. This work is supported by the NSFC (Grant
No.~11371148). AK is indebted to the mobility grant of the Academy of Finland and the NSFC (Grant No.~11411130372), and also thanks the SCUT, where this project was initiated, for warm hospitality.}
\subjclass[2000]{Primary 28A80; Secondary 30C65}
\keywords{correlation dimension, local dimension, Moran construction, finite
clustering property}
\date{\today}

\begin{abstract}
  Under very mild assumptions, we give formulas for the correlation and local dimensions of measures on the limit set of a Moran construction by means of the data used to construct the set.
\end{abstract}

\maketitle

\section{Introduction}
The correlation dimension was introduced in \cite{PGH}. It is widely used in numerical investigations of dynamical systems. The properties of the correlation dimension and the $L^q$-spectrum has been studied for various types of attractors of iterated function systems; for example, see \cite{CHY, MMR, P97, PT95, SS98}. We continue this line of research and in Proposition \ref{result 5}, we complement the study initiated in \cite{KLS, KRS13}.

The other important object in this note is the local
dimension of a measure. It has a close connection with the theory of
Hausdorff and packing dimensions of a set. Therefore it is a
classical problem to try to express the local dimension by means
of the data used to construct the set; for example, see
\cite{BK, CM92, KLS, KR15, LW11, S90}. Our main result in this note is
Theorem \ref{result 8}. Under a natural separation condition, the finite clustering property, it solves this problem completely.

\section{Correlation dimension via general filtrations} \label{sec:correlation}

Let $(X, d)$ be a compact metric space and $\mu$ a locally finite
Borel regular measure supported in $X$. Since the metric will always be clear from the content, we simply denote $(X,d)$ by $X$. Recall that the support of a measure $\mu$, denoted by $\spt(\mu)$, is the smallest closed subset of $X$ with full $\mu$-measure. For $s\geq 0$ and $x\in X$, define the \emph{$s$-potential} of $\mu$ at the point $x$ to be
\begin{equation*}
  \phi_{s}(x)= \int d(x, y)^{-s} \dd\mu(y),
\end{equation*}
where $d(x, y)$ is the distance between two points $x$ and $y$ in $X$.
Furthermore, define the \emph{$s$-energy} of $\mu$ to be
\begin{equation*}
  I_{s}(\mu)=\int \phi_{s}(x) \dd\mu(x)=\iint d(x, y)^{-s}\dd\mu(x) \dd\mu(y).
\end{equation*}
For the basic properties of the $s$-energy, the reader is referred to the
Mattila's book \cite[\S 8]{M95}. The quantity
\begin{equation*}
  \dimcor(\mu) = \inf \{s : I_{s}(\mu)= \infty\} = \sup \{ s : I_s(\mu) < \infty \}
\end{equation*}
is called the \emph{correlation dimension} of the measure $\mu$. Measure-theoretical
properties of this dimension map are studied in \cite{MMR}.

We now recall the definition of the local dimension of measures. Let
$\mu$ be a locally finite Borel regular measure on metric space $X$.
The \emph{lower} and \emph{upper local dimensions} of the measure
$\mu$ at a point $x \in X$ are defined respectively by
\begin{align*}
  \ldimloc(\mu,x)&=\liminf_{r \downarrow 0}\frac{\log \mu(B(x,r))}{\log r}, \\
  \udimloc(\mu,x)&=\limsup_{r \downarrow 0}\frac{\log \mu(B(x,r))}{\log r}.
\end{align*}
Here $B(x,r)$ is the closed ball of radius $r>0$ centered at $x\in X$.
We also define the \emph{lower Hausdorff dimension} of the measure
$\mu$ by setting
\begin{equation*}
  \ldimh(\mu)=\essinf_{x\sim \mu} \ldimloc(\mu,x).
\end{equation*}
The correlation dimension of a measure $\mu$ is at most
the lower Hausdorff dimension of the measure $\mu$. We recall the proof of
this simple fact in the following lemma.

\begin{lemma} \label{correlation_lemmas}
  If $X$ is a compact metric space and $\mu$ is a
  finite Borel regular measure on $X$, then
  \begin{equation*}
    \ldimloc(\mu,x) = \inf\{ s : \phi_s(x)=\infty \} = \sup\{ s : \phi_s(x) < \infty \}
  \end{equation*}
  for all $x \in X$. Furthermore,
  \begin{equation*}
    \dimcor(\mu) = \liminf_{r\downarrow 0}\frac{\log\int\mu(B(x,r))\dd\mu(x)}{\log r} \leq \ldimh(\mu).
  \end{equation*}
\end{lemma}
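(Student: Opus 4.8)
The plan is to establish the three assertions in turn: the characterisation of $\ldimloc(\mu,x)$ through the finiteness of the potentials $\phi_{s}(x)$, the integral formula for $\dimcor(\mu)$, and then the inequality $\dimcor(\mu)\le\ldimh(\mu)$. Two harmless reductions come first. After multiplying the metric by a constant we may assume $\diam X\le 1$, which alters none of the quantities in the statement. We may also assume that $\mu$ has no atoms: if $\mu$ had an atom, then at that point $\phi_{s}(x)=\infty$ for every $s>0$, one has $I_{s}(\mu)=\infty$ for every $s>0$, and $\ldimloc(\mu,\cdot)$ would vanish on a set of positive measure, so all three identities would hold with common value $0$.

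For the first identity I would decompose $X$ into the dyadic annuli $A_{n}=\{y:2^{-n-1}<d(x,y)\le 2^{-n}\}$. Since $2^{ns}\le d(x,y)^{-s}<2^{(n+1)s}$ on $A_{n}$, we have $\sum_{n}2^{ns}\mu(A_{n})\le\phi_{s}(x)\le 2^{s}\sum_{n}2^{ns}\mu(A_{n})$, while $\sum_{m\ge n}\mu(A_{m})=\mu(B(x,2^{-n}))$ because $x$ is not an atom. If $s<\ldimloc(\mu,x)$, choose $t$ with $s<t<\ldimloc(\mu,x)$; then $\mu(B(x,2^{-n}))\le 2^{-nt}$ for all large $n$, so $\sum_{n}2^{ns}\mu(A_{n})$ is dominated by a convergent geometric series and $\phi_{s}(x)<\infty$. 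If $s>\ldimloc(\mu,x)$, choose $t$ with $\ldimloc(\mu,x)<t<s$; after rounding radii to dyadic scales there are $n_{k}\to\infty$ and $c>0$ with $\mu(B(x,2^{-n_{k}}))\ge c\,2^{-n_{k}t}$, whence the tail $\sum_{m\ge n_{k}}2^{ms}\mu(A_{m})\ge 2^{n_{k}s}\mu(B(x,2^{-n_{k}}))\ge c\,2^{n_{k}(s-t)}\to\infty$, which a convergent series cannot do, so $\phi_{s}(x)=\infty$. Thus $[0,\ldimloc(\mu,x))\subseteq\{s:\phi_{s}(x)<\infty\}$ and $(\ldimloc(\mu,x),\infty)\subseteq\{s:\phi_{s}(x)=\infty\}$, and these two inclusions by themselves pin down both the supremum and the infimum in the statement as $\ldimloc(\mu,x)$, with no appeal to monotonicity in $s$ needed.

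For the second identity, put $E(r)=\int\mu(B(x,r))\dd\mu(x)=(\mu\times\mu)(\{(x,y):d(x,y)\le r\})$, a nondecreasing function of $r$ with $E(r)\to 0$ as $r\downarrow 0$ (here $x\mapsto\mu(B(x,r))$ is Borel, being upper semicontinuous for closed balls). Fubini, the layer-cake formula and the substitution $r=t^{-1/s}$ yield $I_{s}(\mu)=s\int_{0}^{\infty}E(r)\,r^{-s-1}\dd r$; since $E(r)=\mu(X)^{2}$ for $r\ge\diam X$, the tail $\int_{\diam X}^{\infty}$ is finite for every $s>0$, so $I_{s}(\mu)<\infty$ if and only if $\int_{0}^{1}E(r)\,r^{-s-1}\dd r<\infty$. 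Now one runs the dichotomy of the first part with $E(r)$ replacing $\mu(B(x,2^{-n}))$. Writing $\beta$ for the liminf on the right-hand side of the claim: if $s<\beta$ then $E(r)\le r^{t}$ for small $r$ with $s<t<\beta$, and the integral converges; if $s>\beta$, pick $r_{k}\downarrow 0$ with $E(r_{k})\ge r_{k}^{t}$ for some $\beta<t<s$, pass to a subsequence with $2r_{k+1}<r_{k}$ so that the intervals $[r_{k},2r_{k}]$ are pairwise disjoint, and use monotonicity of $E$ to get $\int_{r_{k}}^{2r_{k}}E(r)\,r^{-s-1}\dd r\ge 2^{-s-1}r_{k}^{t-s}\to\infty$, so the integral diverges. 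Hence $\dimcor(\mu)=\beta$.

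The third assertion is then immediate: if $s<\dimcor(\mu)$ then $\int\phi_{s}(x)\dd\mu(x)=I_{s}(\mu)<\infty$, so $\phi_{s}(x)<\infty$ for $\mu$-almost every $x$, and the first part gives $\ldimloc(\mu,x)\ge s$ for $\mu$-almost every $x$, that is $\ldimh(\mu)\ge s$; letting $s\uparrow\dimcor(\mu)$ finishes it. None of this is deep. The points that need a little care are the bookkeeping at the far field $d(x,y)\ge\diam X$, the reduction at atoms, and the measurability of $x\mapsto\mu(B(x,r))$. The main, and still fairly mechanical, obstacle is the divergence direction of each dichotomy: deducing $\phi_{s}(x)=\infty$, respectively divergence of the integral, from data at only a sparse sequence of scales, which is precisely why one argues through tails of the series, respectively through a disjoint family of intervals, rather than pointwise.
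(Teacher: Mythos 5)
Your proof is correct and follows essentially the same route as the paper's: the dyadic-annuli decomposition of $\phi_s(x)$ is a discretization of the layer-cake formula $\phi_t(x)=t\int_0^\infty r^{-t-1}\mu(B(x,r))\dd r$ that the paper uses, and your Fubini identity $I_s(\mu)=s\int_0^\infty E(r)r^{-s-1}\dd r$ is just its integrated version. The only differences are cosmetic: the paper obtains the divergence direction ($\ldimloc(\mu,x)<s$ implies $\phi_s(x)=\infty$) in one line from the inequality $r^{-s}\mu(B(x,r))\le\phi_s(x)$ rather than by your tail estimate on the annuli series, and you write out in full the converse half of the $\dimcor$ formula that the paper declares ``similar and thus omitted.''
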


\begin{proof}
  Fix $x \in X$. If $s$ is so that $\phi_s(x)<\infty$, then
  \begin{equation} \label{eq:int_phi}
    r^{-s}\mu(B(x,r))\leq \int_{B(x,r)}d(x, y)^{-s} \dd\mu(y) \leq \phi_s(x)
    \quad \text{for all } r>0.
  \end{equation}
  It follows that $\ldimloc(\mu,x) \geq s$ and thus,
  \begin{equation*}
    \ldimloc(\mu,x) \geq \sup\{ s : \phi_s(x)<\infty \}.
  \end{equation*}
  To show that $\dimcor(\mu) \le \ldimh(\mu)$, fix $s > \ldimh(\mu)$. Notice that there exists
  a set $A$ with $\mu(A)>0$ such that $\ldimloc(\mu,x)< s$ for all $x\in A$. The above reasoning implies that $\phi_s(x)=\infty$ for all $x \in A$. Therefore $I_s(\mu)=\infty$ and the claim follows.
  Similarly, if $s < \dimcor(\mu)$, then, by integrating \eqref{eq:int_phi}, we see that
  \begin{equation*}
    r^{-s}\int\mu(B(x,r))\dd\mu(x)\leq I_{s}(\mu)<\infty \quad \text{for all } r>0.
  \end{equation*}
  Therefore
  \begin{equation} \label{eq:int_cor}
    \liminf_{r\downarrow 0}\frac{\log\int\mu(B(x,r))\dd\mu(x)}{\log r}\geq \dimcor(\mu).
  \end{equation}
  To show the remaining inequalities, fix $t<s<\ldimloc(\mu,x)$. Observe that now there exists
  $r_0>0$ such that $\mu(B(x,r))<r^s$ for all $0<r<r_0$. Thus
  \begin{align*}
    \phi_t(x) &= \int d(x, y)^{-t}\dd\mu(y)=t\int_{0}^{\infty}r^{-t-1} \mu(B(x,r))\dd r\\& \leq
    t\int_{0}^{r_{0}}r^{s-t-1}\dd r + t\int_{r_{0}}^{\infty}r^{-t-1}
    \mu(B(x,r))\dd r < \infty
  \end{align*}
  and $\inf\{ s : \phi_s(x)=\infty \} \ge t$. The proof of the converse inequality of \eqref{eq:int_cor} is similar and thus omitted.
\end{proof}

\begin{remark} \label{correlation_remark}
  (1) If there exist $A \subset X$ and $s,r_0,c>0$ such that $\mu(B(x,r)) \le cr^s$ for all $0<r<r_0$ and $x \in A$, then Lemma \ref{correlation_lemmas} implies that $\dimcor(\mu|_A) \ge s$. In particular, if $\mu$ is a finite measure, then for every $\eps > 0$ there exists a compact set $A$ with $\mu(X \setminus A)<\eps$ such that $\dimcor(\mu|_A) \ge \ldimh(\mu)$. To see this, fix $\eps>0$ and let $(s_i)_{i \in \N}$ be a strictly increasing sequence converging to $\ldimh(\mu)$. Egorov's theorem implies that for every $i$ there are $r_i>0$ and a compact set $A_i \subset X$ with $\mu(X \setminus A_i) < 2^{-i}\eps$ such that $\mu(B(x,r)) < r^{s_i}$ for all $0<r<r_i$ and $x \in A_i$. Defining $A = \bigcap_{i=1}^\infty A_i$ we have $\mu(X \setminus A) \le \sum_{i=1}^\infty \mu(X \setminus A_i) < \eps$. Fix $N \in \N$ and let $B_N = \bigcap_{i=1}^N A_i$. Then $\mu(B(x,r)) < r^{s_N}$ for all $0<r<\min_{i\in\{1,\ldots,N\}}r_i$ and $x \in B_N \supset A$. This gives $\dimcor(\mu|_A) \ge s_N$ and, as $N$ was arbitrary, finishes the proof.
  
  (2) Let us consider the standard $\tfrac13$-Cantor set and define $\mu_p$ to be the Bernoulli measure associated to the probability vector $(p,1-p)$. It is well known that $\dimcor(\mu_p) = -\log_3(p^2 + (1-p)^2)$; for example, see Proposition \ref{result 5}. Recalling e.g.\ \cite[Proposition 10.4]{F97}, we see that $\dimcor(\mu_p) < \ldimh(\mu_p)$ for all $p \in (0,1) \setminus \{ 1/2 \}$.
\end{remark}

If the metric space $X$ is doubling, then we
can define the correlation dimension via a discrete process.
More precisely, we will see that the  definition can be given in terms of
general filtrations. These filtrations can be considered to be generalized dyadic cubes. This gives a way to calculate the correlation dimension in many Moran constructions; see Corollary \ref{thm:moran-cor}.

Before stating the theorem,
we recall the definitions of the doubling metric space and the
general filtration. A metric space $X$ is said to be \emph{doubling}, if there is a doubling constant
$N=N(X)\in \mathbb{N}$ such that any
closed ball $B(x,r)$ with center $x\in X$
and radius $r> 0$ can be covered by $N$ balls of radius $r/2$. A
doubling metric space is always separable and the doubling property
can be stated in several equivalent ways. For instance, a metric
space $X$ is doubling if and only if there are $0<s,C< \infty$
such that if $\mathcal {B}$ is an $r$-packing of a closed ball
$B(x,R)$ with $0<r<R$, then the cardinality of $\mathcal {B}$ is at
most $C(R/r)^{s}$. Here the \emph{$r$-packing} $\mathcal {B}$ of a set $A$
is a collection of disjoint closed balls having radius $r$.
We write $\lambda B(x, r)= B(x, \lambda r)$ for $\lambda \in (0,\infty)$.

The definition of the general filtration is introduced in \cite{KLS}.
We assume that $(\delta_{n})_{n\in \mathbb{N}}$ and
$(\gamma_{n})_{n\in \mathbb{N}}$ are two decreasing sequences of
positive real numbers satisfying
\begin{labeledlist}{F}
  \item $\delta_n \le \gamma_n$ for all $n \in \N$, \label{F-smaller}
  \item $\lim_{n \to \infty} \gamma_n = 0$, \label{F-gammalim}
  \item $\lim_{n \to \infty} \log\delta_n/\log\delta_{n+1} = 1$, \label{F-deltalim}
  \item $\lim_{n \to \infty} \log\gamma_n/\log\delta_n = 1$. \label{F-gammadeltalim}
\end{labeledlist}
For each $n\in \mathbb{N}$, let $\mathcal {Q}_{n}$ be a
collection of disjoint Borel subsets of the doubling metric space
$X$ such that each $Q\in \mathcal {Q}_{n}$ contains a ball $B_{Q}$
of radius $\delta_{n}$ and is contained in a ball $B^{Q}$ of radius
$\gamma_{n}$. Define
\begin{equation*}
E=\bigcap\limits_{n\in \mathbb{N}}\bigcup\limits_{Q\in \mathcal
{Q}_{n}}Q.
\end{equation*}
The collection $\{\mathcal {Q}_{n}\}_{n\in \mathbb{N}}$ is called the \emph{general
filtration} of $E$.

The classical dyadic cubes of the Euclidean space is an example of a general filtration.
Such kind of nested constructions can also be defined on doubling metric spaces (see e.g.\ \cite{KRS12b}) and
these constructions also serve as examples. It should be noted that the nested structure is not always a necessity; consult \cite{KRS12} for such examples. In Lemma \ref{moran_filtration}, we show that certain Moran constructions are general filtrations. These constructions include, for example, all the self-conformal sets satisfying the strong separation condition.

Besides giving the desired discrete version of the definition, the following result states also that the correlation dimension is in fact the $L^2$-spectrum of the measure; see \cite[Proposition 3.2]{KLS}. In $\R^n$, this is of course well known -- the point here is that the equivalence now covers the general filtrations case too.

\begin{proposition}\label{result 5}
  Let $X$ be a compact doubling metric space. If $\{\mathcal{Q}_n\}_{n \in \N}$ is a general filtration of $E$ and $\mu$ is a
  finite Borel regular measure on $E$, then
  \begin{equation*}
    \dimcor(\mu)=\liminf_{n\to\infty} \frac{\log\sum_{Q\in
  \mathcal {Q}_{n}} \mu(Q)^{2}}{\log \delta_{n}}.
  \end{equation*}
\end{proposition}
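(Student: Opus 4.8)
The plan is to start from the identity $\dimcor(\mu)=\liminf_{r\downarrow 0}\log\bigl(\int\mu(B(x,r))\dd\mu(x)\bigr)/\log r$ supplied by Lemma \ref{correlation_lemmas}, and to squeeze the correlation integral $\int\mu(B(x,r))\dd\mu(x)$ between constant multiples of $\sum_{Q\in\mathcal{Q}_n}\mu(Q)^2$ at a scale $n=n(r)$ linked to $r$ through the sequences $(\delta_n)$ and $(\gamma_n)$. The basic remark is that $\mu$-almost every $x$ lies in exactly one cube $Q_n(x)\in\mathcal{Q}_n$ --- in one because $\spt\mu\subseteq E\subseteq\bigcup_{Q\in\mathcal{Q}_n}Q$, in at most one because the cubes in $\mathcal{Q}_n$ are disjoint --- so that $\int\mu(Q_n(x))\dd\mu(x)=\sum_{Q\in\mathcal{Q}_n}\mu(Q)^2$.

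For the lower bound on the correlation integral I would observe that $Q_n(x)$ has diameter at most $2\gamma_n$ and contains $x$, so if $2\gamma_n\le r$ then $Q_n(x)\subseteq B(x,r)$ and hence $\mu(B(x,r))\ge\mu(Q_n(x))$; integrating, $\int\mu(B(x,r))\dd\mu(x)\ge\sum_{Q\in\mathcal{Q}_n}\mu(Q)^2$, and in particular this holds with $r=2\gamma_n$. For the upper bound, note that for $\mu$-almost every $x$ one has $B(x,r)\cap E\subseteq\bigcup\{Q\in\mathcal{Q}_n:\dist(Q,Q_n(x))\le r\}$, so $\mu(B(x,r))\le\sum_{Q:\dist(Q,Q_n(x))\le r}\mu(Q)$, and integrating over $E$ broken up according to the cubes $Q'=Q_n(x)$ gives $\int\mu(B(x,r))\dd\mu(x)\le\sum_{(Q,Q'):\dist(Q,Q')\le r}\mu(Q)\mu(Q')$. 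Here the doubling hypothesis enters: any $Q$ with $\dist(Q,Q')\le r$ lies in the ball of radius $r+3\gamma_n$ about the center of $B^{Q'}$ and contains the ball $B_Q$ of radius $\delta_n\le\gamma_n<r+3\gamma_n$, and these $B_Q$ are disjoint, so the packing characterization of doubling spaces bounds the number of such $Q$ by $M:=C\bigl((r+3\gamma_n)/\delta_n\bigr)^{s}$, uniformly in $Q'$. Feeding in $2\mu(Q)\mu(Q')\le\mu(Q)^2+\mu(Q')^2$ and using that the relation $\dist(Q,Q')\le r$ is symmetric then collapses the double sum to $M\sum_{Q\in\mathcal{Q}_n}\mu(Q)^2$. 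Thus $\int\mu(B(x,r))\dd\mu(x)\le M\sum_{Q\in\mathcal{Q}_n}\mu(Q)^2$ for every $n$ and every $r>0$.

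It then remains to take logarithms at matched scales. For $\dimcor(\mu)\le\liminf_{n}\log\bigl(\sum_{Q\in\mathcal{Q}_n}\mu(Q)^2\bigr)/\log\delta_n$ I would use the lower bound at $r=2\gamma_n$ (legitimate since $2\gamma_n\to 0$ by \ref{F-gammalim}), divide by $\log\delta_n$, and invoke \ref{F-gammadeltalim} to replace $\log(2\gamma_n)$ by $\log\delta_n$ in the denominator, after which passing to a subsequence realizing the relevant $\liminf$ finishes it. For the reverse inequality, given small $r$ I would choose $n=n(r)$ with $\delta_n\le r<\delta_{n-1}$; then \ref{F-deltalim} forces $\log\delta_{n(r)}/\log r\to 1$, and with \ref{F-gammadeltalim} this forces $\log\gamma_{n(r)}/\log r\to1$ and hence $\log M/\log r\to 0$; dividing $\int\mu(B(x,r))\dd\mu(x)\le M\sum_{Q\in\mathcal{Q}_n}\mu(Q)^2$ by $\log r<0$ and letting $r\downarrow 0$, noting that $n(r)$ eventually attains every large integer, yields $\dimcor(\mu)\ge\liminf_{n}\log\bigl(\sum_{Q\in\mathcal{Q}_n}\mu(Q)^2\bigr)/\log\delta_n$.

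The one genuinely delicate point is the upper bound on the correlation integral: a naive Cauchy--Schwarz step would only produce a bound involving $\sqrt{\sum_{Q}\mu(Q)^2}$ and pin down $\dimcor(\mu)$ merely up to a factor, so applying $2ab\le a^2+b^2$ \emph{after} the packing count (rather than before) is what makes the two bounds meet. Everything else --- in particular the passage between the scales $r$, $\delta_{n}$ and $\gamma_{n}$ --- is careful but routine bookkeeping with the defining properties \ref{F-smaller}--\ref{F-gammadeltalim}.
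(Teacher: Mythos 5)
Your proof is correct and follows essentially the same route as the paper: both reduce to the integral characterization of $\dimcor(\mu)$ from Lemma \ref{correlation_lemmas} and sandwich $\int\mu(B(x,r))\dd\mu(x)$ between $\sum_{Q\in\QQ_n}\mu(Q)^2$ and a multiple of it of size $C(\gamma_n/\delta_n)^{O(s)}$, which \ref{F-deltalim} and \ref{F-gammadeltalim} render harmless after taking logarithms at matched scales. The only substantive difference is in the double sum: you symmetrize via $2ab\le a^2+b^2$ and need a single packing count, whereas the paper uses $\mu(Q)\sum_{Q'\in\CC_Q}\mu(Q')\le\#\CC_Q\sum_{Q'\in\CC_Q}\mu(Q')^2$ and then a second packing estimate to control how many collections $\CC_Q$ a given $Q'$ can belong to; your bookkeeping is marginally cleaner but the two are interchangeable.
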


\begin{proof}
Observe that for each $Q\in \mathcal {Q}_{n}$ we have $Q\subset B(x, 2\gamma_{n})$
for all $x\in Q$. Therefore
\begin{equation*}
  \int\mu(B(x,2\gamma_n))\dd\mu(x)=\sum_{Q\in \mathcal
  {Q}_{n}}\int_{Q}\mu(B(x,2\gamma_n))\dd\mu(x)\geq \sum_{Q\in \mathcal {Q}_{n}}
  \mu(Q)^{2}
\end{equation*}
and it follows from Lemma \ref{correlation_lemmas} and \ref{F-gammadeltalim} that
\begin{equation*}
  \dimcor(\mu)\leq \liminf_{n \to \infty} \frac{\log\int\mu(B(x,2\gamma_n))\dd\mu(x)}{\log 2\gamma_n}
  \leq \liminf_{n\rightarrow\infty} \frac{\log\sum_{Q\in
\mathcal {Q}_{n}} \mu(Q)^{2}}{\log \delta_{n}}.
\end{equation*}
To show the other inequality, fix $r>0$ and let $n \in \N$ be such that $\gamma_{n+1} \le r < \gamma_n$.
Choose for each $Q\in \mathcal {Q}_{n}$ balls
$B_{Q}$ of radius $\delta_{n}$ and $B^Q$ of radius $\gamma_n$ so that $B_{Q}\subset Q \subset B^Q$. Now for each $Q\in
\mathcal {Q}_{n}$ we have
\begin{equation*}
  Q \subset B(x, 2\gamma_{n})\subset B_{Q}[4\gamma_{n}] \subset
  \bigcup_{Q'\in \mathcal {C}_{Q}} Q'
\end{equation*}
for all $x\in Q$, where $\mathcal {C}_{Q}=\{Q'\in \mathcal {Q}_{n} : Q'\cap
B_{Q}[4\gamma_{n}] \neq \emptyset \}$ and $B[r]$ denotes the ball of
radius $r$ having the same center as the ball $B$. Thus
\begin{equation} \label{eq:r5_estimate1}
  \int\mu(B(x,2\gamma_n))\dd\mu(x)=\sum_{Q\in \mathcal
  {Q}_{n}}\int_{Q}\mu(B(x,2\gamma_n))\dd\mu(x) \leq \sum_{Q\in
  \mathcal {Q}_{n}}\mu(Q) \sum_{Q'\in \mathcal {C}_{Q}}\mu(Q').
\end{equation}
Observe that if $Q \in \QQ_n$, then
\begin{equation} \label{eq:r5_estimate2}
  \mu(Q) \sum_{Q' \in \CC_Q} \mu(Q') \leq \biggl( \sum_{Q' \in \CC_Q} \mu(Q') \biggr)^2 \leq \#\CC_Q \sum_{Q' \in \CC_Q} \mu(Q')^2.
\end{equation}
Here $\# \mathcal {C}_{Q}$ is the cardinality of the set $\mathcal {C}_{Q}$.
Let us next estimate this cardinality.

Fix $Q \in \QQ_n$ and $Q' \in \CC_Q$. Since $Q'\cap
B_{Q}[4\gamma_{n}]\neq \emptyset$ we have $B^{Q'} \cap B_{Q}[4\gamma_{n}] \neq \emptyset$ and
\begin{equation*}
  B_{Q'} \subset Q' \subset B^{Q'} \subset B_{Q}[6\gamma_{n}].
\end{equation*}
Thus the collection $\{B_{Q'}: Q'\in \mathcal {C}_{Q}\}$
is a $\delta_{n}$-packing of the ball $B_{Q}[6\gamma_{n}]$.
It follows from the doubling property of $X$ that
\begin{equation*}
  \#\CC_Q = \#\{B_{Q'}: Q'\in \mathcal {C}_{Q}\}\leq
  C\Bigl(\frac{6\gamma_{n}}{\delta_{n}}\Bigr)^{s}.
\end{equation*}
Recalling \eqref{eq:r5_estimate1} and \eqref{eq:r5_estimate2}, this estimate gives
\begin{equation} \label{eq:r5_estimate3}
  \int\mu(B(x,2\gamma_n))\dd\mu(x) \leq C\Bigl(\frac{6\gamma_{n}}{\delta_{n}}\Bigr)^{s} \sum_{Q \in \QQ_n} \sum_{Q' \in \CC_Q} \mu(Q')^2.
\end{equation}
To get rid of the double-sum, we need to estimate the cardinality of the set
$\DD_{Q'} = \{ Q \in \QQ_n : Q' \in \CC_Q \}$. If $Q \in \QQ_n$ is such that
$Q' \in \CC_Q$, then $B^{Q'} \cap B_Q[4\gamma_n] \ne \emptyset$ and $B_Q \subset B^{Q}[4\gamma_n] \subset B^{Q'}[9\gamma_n]$.
Thus the collection $\{ B_Q : Q \in \DD_{Q'} \}$ is a $\delta_n$-packing of the ball $B^{Q'}[9\gamma_n]$ and, as above,
\begin{equation*}
  \#\DD_{Q'} = \#\{ B_Q : Q \in \DD_{Q'} \} \leq C\Bigl( \frac{9\gamma_n}{\delta_n} \Bigr)^s.
\end{equation*}
Therefore, it follows from \eqref{eq:r5_estimate3} that
\begin{equation*}
  \int\mu(B(x,2\gamma_n))\dd\mu(x) \leq C^2\Bigl(\frac{6\gamma_{n}}{\delta_{n}}\Bigr)^{s} \Bigl( \frac{9\gamma_n}{\delta_n} \Bigr)^s \sum_{Q \in \QQ_n} \mu(Q)^2.
\end{equation*}
This, together with \ref{F-deltalim} and \ref{F-gammadeltalim}, yields
\begin{align*}
  \dimcor(\mu) &\geq \liminf_{n \to \infty} \frac{\log\int\mu(B(x,2\gamma_n))\dd\mu(x)}{\log\gamma_{n+1}}
  \geq \liminf_{n \to \infty} \frac{\log \sum_{Q \in \QQ_n} \mu(Q)^2}{\log\delta_{n}}
\end{align*}
finishing the proof.
\end{proof}

\section{Dimensions in Moran constructions}

We adopt the following notation for the rest of the
paper. For each $j \in \mathbb{N}$ we fix a finite set
$I_{j}$ and define $\Sigma = \prod_{j=1}^\infty I_{j}$. Thus, if
$\sigma \in \Sigma$, then $\sigma = i_{1}i_{2}\cdots$ where $i_{j}
\in I_{j}$ for all $j \in \mathbb{N}$. With the product discrete topology, $\Sigma$ is compact.
We let $\sigma|_{n} = i_{1} \cdots i_{n}$ and $\Sigma_{n} = \{
\sigma|_{n} : \sigma \in \Sigma \}$ for all $n \in \mathbb{N}$. We
also define $\Sigma_{*} = \bigcup_{n=1}^\infty \Sigma_{n}$. The
concatenation of $\sigma \in \Sigma_{*}$ and $\delta \in \Sigma_{*}
\cup \Sigma$ is denoted by $\sigma\delta$. The length of $\sigma \in
\Sigma_{*}$ is denoted by $|\sigma|$. For $\sigma \in \Sigma_{*}$ we
set $\sigma^- = \sigma|_{|\sigma|-1}$ and $[\sigma] = \{
\omega \in \Sigma : \omega|_{|\sigma|} = \sigma \}$. Finally, we define $\sigma|_{0}$ to be $\varnothing$.

Let $X$ be a complete separable metric space. A \emph{Moran
construction} is a collection $\{ E_\sigma : \sigma \in \Sigma_{*}
\}$ of compact subsets of $E_\varnothing \subset X$ satisfying the following two
properties:
\begin{labeledlist}{M}
  \item $E_{\sigma} \subset E_{\sigma^{-}}$ for all $\sigma \in
  \Sigma_{*}$, \label{M1}
  \item $\diam(E_{\sigma|_{n}}) \to 0$ as $n \to \infty$ for all
  $\sigma \in \Sigma$. \label{M2}
\end{labeledlist}
Here $\diam(A)$ denotes the diameter of the set $A$.
The \emph{limit set} of the Moran construction is the compact set
\begin{equation*}
  E = \bigcap \limits_{n\in \N}\bigcup\limits_{\sigma\in \Sigma_{n}}
  E_{\sigma}.
\end{equation*}
Each $E_\sigma$ is
called a \emph{construction set} of level $|\sigma|$. We emphasize that
the placement of the construction sets at each level of the Moran
construction can be arbitrary, and they need not be disjoint.
Clearly, self-similar sets introduced in \cite{H81}, (finitely generated) self-conformal sets studied in \cite{MU96}, and Moran
sets defined in \cite{CM92, KLS, KR15, KV08, LW11, RV11, W00} are all special
cases of the limit sets of Moran constructions.
If $\sigma\in \Sigma$, then
the single point in the set $\bigcap_{n\in \N}E_{\sigma|_{n}}\subset E$ is
denoted by $\pi(\sigma)$. This defines a surjective mapping $\pi \colon \Sigma \to E$
which is continuous in the product discrete topology.

The following lemma connects Moran constructions to general filtrations considered in \S \ref{sec:correlation}.

\begin{lemma} \label{moran_filtration}
  Let $\{ E_\sigma : \sigma \in \Sigma_{*} \}$ be a Moran construction on a complete separable metric space satisfying \ref{M1}, \ref{M2},
  \begin{labeledlist}{M}
    \setcounter{enumi}{2}
    \item $E_{\sigma i} \cap E_{\sigma j} = \emptyset$ for all $\sigma i, \sigma j \in \Sigma_*$ with $i \ne j$, \label{M3}
    \item there exists $C_0>0$ such that for each $\sigma\in\Sigma_*$ there is $x\in E_\sigma$ for which $$B\left(x,C_0 \diam(E_\sigma)\right)\subset E_\sigma,$$ \label{M4}
    \item it holds that $$\lim_{n \to \infty} \frac{\log\diam(E_{\sigma|_n})}{\log\min\{ \diam(E_{\omega}) : \omega \in \Sigma_{n+1} \text{ such that } \omega^- = \sigma|_n \}}=1,$$ where the convergence is uniform for all $\sigma \in \Sigma$. \label{M5}
  \end{labeledlist}
  If $\QQ_n = \{ E_\sigma : \diam(E_\sigma) \le \gamma_n < \diam(E_{\sigma^-}) \}$, where $\gamma_n = \min\{ \diam(E_\sigma) : \sigma \in \Sigma_n \}$, then there exists a sequence $(\delta_n)_{n \in \N}$ such that $\{ \QQ_n \}_{n \in \N}$ is a general filtration for $E$.
\end{lemma}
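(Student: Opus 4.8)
The plan is to verify the four axioms \ref{F-smaller}--\ref{F-gammadeltalim} for the sequence $(\gamma_n)$ already given together with an appropriately chosen $(\delta_n)$, and then to check that the collections $\QQ_n$ have the required geometric properties (each $Q \in \QQ_n$ contains a $\delta_n$-ball and is contained in a $\gamma_n$-ball, the members of $\QQ_n$ are pairwise disjoint, and $\bigcap_n \bigcup_{Q \in \QQ_n} Q = E$). The natural choice is $\delta_n = C_0 \gamma_n$, or more precisely $\delta_n = C_0 \min\{\diam(E_\sigma):\sigma\in\Sigma_n\}$ if one wants $\delta_n\le\gamma_n$ outright (one may shrink $C_0$ to assume $C_0<1$). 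I would first record the trivial observations: by \ref{M2} and compactness $\gamma_n \to 0$, and $(\gamma_n)$ is decreasing along the chain $\Sigma_n \to \Sigma_{n+1}$ since each child $E_{\sigma i}$ is contained in its parent $E_\sigma$, so $\diam(E_{\sigma i}) \le \diam(E_\sigma)$; hence \ref{F-smaller} and \ref{F-gammalim} hold. Axiom \ref{F-gammadeltalim}, $\log\gamma_n/\log\delta_n \to 1$, is immediate from $\delta_n = C_0\gamma_n$ and $\gamma_n \to 0$ (the multiplicative constant washes out in the logarithmic ratio).

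The only axiom with real content is \ref{F-deltalim}, i.e.\ $\log\delta_n/\log\delta_{n+1} \to 1$, equivalently $\log\gamma_n/\log\gamma_{n+1}\to 1$. This is exactly where hypothesis \ref{M5} enters. Fix $n$ and pick $\sigma\in\Sigma_n$ realizing the minimum, so $\gamma_n = \diam(E_{\sigma|_n})$. On the one hand $\gamma_{n+1} \le \diam(E_{\sigma\omega})$ for the child $\sigma\omega$ minimizing the diameter over children of $\sigma$, and \ref{M5} says $\log\diam(E_{\sigma|_n})/\log(\text{that minimal child diameter})\to 1$ uniformly, which gives a lower bound on $\log\gamma_n/\log\gamma_{n+1}$ of the form $1+o(1)$. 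On the other hand one needs the reverse comparison: $\gamma_{n+1}$ cannot be too small relative to $\gamma_n$. Here I would take $\tau\in\Sigma_{n+1}$ with $\gamma_{n+1}=\diam(E_\tau)$ and compare $\diam(E_\tau)$ to $\diam(E_{\tau^-})$, which is $\ge\gamma_n$; again by \ref{M5} applied along $\tau$, $\log\diam(E_{\tau^-})/\log\diam(E_\tau)\to 1$ uniformly, so $\log\gamma_n/\log\gamma_{n+1} \le \log\diam(E_{\tau^-})/\log\gamma_{n+1} = 1+o(1)$. Combining the two bounds yields \ref{F-deltalim}. The uniformity clause in \ref{M5} is essential precisely because the minimizing words $\sigma$ and $\tau$ change with $n$ and we have no control over which they are.

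It then remains to check the structural requirements on $\QQ_n$. By definition $Q = E_\sigma \in \QQ_n$ satisfies $\diam(E_\sigma)\le\gamma_n$, hence $E_\sigma$ is contained in a closed ball of radius $\gamma_n$ (any ball centered at a point of $E_\sigma$), and by \ref{M4} it contains a ball of radius $C_0\diam(E_\sigma)$; since we also want a ball of radius exactly $\delta_n$ this is fine when $\diam(E_\sigma)\ge\gamma_n/2$, say — but the definition only forces $\diam(E_\sigma)\le\gamma_n$ with $\diam(E_{\sigma^-})>\gamma_n$, which via \ref{M5} forces $\diam(E_\sigma)$ to be comparable to $\gamma_n$ up to a factor $\to 1$, so after possibly replacing $\delta_n$ by a slightly smaller sequence (still satisfying \ref{F-deltalim} and \ref{F-gammadeltalim}) every $E_\sigma\in\QQ_n$ contains a $\delta_n$-ball. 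Disjointness of distinct $E_\sigma, E_{\sigma'}\in\QQ_n$: if neither is an ancestor of the other they are separated by \ref{M3} at the first level where their coding words differ (two distinct construction sets with the same parent are disjoint, hence so are their descendants), and the stopping rule $\diam(E_\sigma)\le\gamma_n<\diam(E_{\sigma^-})$ prevents one from being an ancestor of the other — if $\sigma'$ extended $\sigma$ then $\diam(E_{\sigma'})\le\diam(E_\sigma)\le\gamma_n$ forces $\sigma'=\sigma$ since $\diam(E_{(\sigma')^-})>\gamma_n\ge\diam(E_\sigma)$ would fail once $(\sigma')^-$ extends or equals $\sigma$. Finally, since $\gamma_n\to 0$, for every $\tau\in\Sigma$ the word $\tau|_n$ has some initial segment lying in $\QQ_m$ for each $m$, so $\pi(\tau)\in\bigcup_{Q\in\QQ_m}Q$; conversely points of $\bigcup_{Q\in\QQ_m}Q$ lie in some $E_\sigma$ with $|\sigma|\ge m$, giving $\bigcap_m\bigcup_{Q\in\QQ_m}Q = E$. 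The main obstacle throughout is organizing the two-sided logarithmic comparison in the previous paragraph so that the uniformity in \ref{M5} is correctly exploited; everything else is bookkeeping with the combinatorics of the stopping set.
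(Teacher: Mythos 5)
Your proposal is essentially correct, and it is considerably more informative than the paper's own proof, which is a single sentence deferring to the proof of \cite[Lemma 4.2]{KLS}; the argument you give is, in substance, the one that has to be extracted from that reference. Two points deserve tightening. First, in the verification of \ref{F-deltalim} the directions of your inequalities are muddled: since $\gamma_{n+1}\le\gamma_n<1$ eventually, the bound $\log\gamma_n/\log\gamma_{n+1}\le 1$ is automatic and needs no input from \ref{M5}; the entire content is the reverse bound, and there your displayed inequality is written backwards. From $\diam(E_{\tau^-})\ge\gamma_n$ and $\log\gamma_{n+1}<0$ one gets $\log\gamma_n/\log\gamma_{n+1}\ge\log\diam(E_{\tau^-})/\log\gamma_{n+1}$, and the right-hand side tends to $1$ by \ref{M5} because $\tau$ realizes the minimum over all of $\Sigma_{n+1}$ and hence over the children of $\tau^-$. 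Second, in the $\delta_n$-ball step the comparison that \ref{M5} yields is in the exponent, namely $\diam(E_\sigma)\ge\gamma_n^{1+o(1)}$ for $E_\sigma\in\QQ_n$, not a multiplicative one, so the aside about the case $\diam(E_\sigma)\ge\gamma_n/2$ is a red herring; moreover, to invoke the uniform convergence in \ref{M5} at the parent $\sigma^-$ you must record that $\min\{|\sigma| : E_\sigma\in\QQ_n\}\to\infty$ as $n\to\infty$, which holds because $\diam(E_\sigma)\le\gamma_n\to 0$ while $\min\{\diam(E_\omega):\omega\in\Sigma_j,\ j\le k\}>0$ for each fixed $k$. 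Taking $\delta_n=C_0\gamma_n^{1+\eps_n}$ with $\eps_n\downarrow 0$ supplied by this estimate (adjusted to be monotone so that $(\delta_n)$ is decreasing), conditions \ref{F-smaller}, \ref{F-deltalim} and \ref{F-gammadeltalim} survive and every $Q\in\QQ_n$ contains a $\delta_n$-ball by \ref{M4}. Your disjointness argument (splitting into the incomparable case, handled by \ref{M1} and \ref{M3}, and the prefix case, excluded by the stopping rule) and the identification of $\bigcap_n\bigcup_{Q\in\QQ_n}Q$ with $E$ are correct as written.
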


\begin{proof}
  The claim is proved in the course of the proof of \cite[Lemma 4.2]{KLS}.
\end{proof}

A self-conformal set satisfying the strong separation condition is a simple example of a limit set of a Moran construction satisfying the assumptions of Lemma \ref{moran_filtration}. This can be easily seen since the condition
\begin{labeledlist}{M}
  \setcounter{enumi}{5}
  \item there exists $0<c\le 1$ such that $\diam(E_\sigma) \ge c\diam(E_{\sigma^-})$ for all $\sigma \in \Sigma_* \setminus \{ \varnothing \}$, \label{M6}
\end{labeledlist}
which is satisfied by any self-conformal set, together with \ref{M1} implies \ref{M5}.

Recall that in complete separable metric spaces locally finite Borel regular measures are Radon. 
If $X$ and $Y$ are complete separable spaces and $\mu$ is a Radon measure with compact support on $Y$, then the \emph{pushforward measure} of $\mu$ under a continuous mapping $f \colon Y \to X$ is denoted by $f\mu$. In this case, $f\mu$ is a Radon measure and $\spt(f\mu) = f(\spt(\mu))$. 

\begin{corollary} \label{thm:moran-cor}
  Let $\{ E_\sigma : \sigma \in \Sigma_* \}$ be a Moran construction on a compact doubling metric space satisfying \ref{M1}--\ref{M4} and
  \begin{labeledlist}{M}
    \setcounter{enumi}{6}
    \item there exist $C\ge 1$ and a sequence $(\beta_n)_{n \in \N}$ with $\log\beta_n/\log\beta_{n+1} \to 1$ such that $C^{-1}\beta_n \le \diam(E_\sigma) \le C\beta_n$ for all $\sigma \in \Sigma_n$ and $n \in \N$. \label{M7}
  \end{labeledlist}
  If $\mu$ is a finite Borel regular measure on $\Sigma$, then
  \begin{equation*}
    \dimcor(\pi\mu) = \liminf_{n \to \infty} \frac{\log\sum_{\omega \in \Sigma_n} \mu([\omega])^2}{\log\beta_n}.
  \end{equation*}
\end{corollary}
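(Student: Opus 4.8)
The plan is to deduce Corollary~\ref{thm:moran-cor} from Proposition~\ref{result 5} by checking that the hypotheses \ref{M1}--\ref{M4} together with \ref{M7} put us in a situation where Lemma~\ref{moran_filtration} applies, and then by comparing the filtration sums with the symbolic sums $\sum_{\omega \in \Sigma_n} \mu([\omega])^2$. The first task is to verify that \ref{M7} implies \ref{M5}: since every $\sigma|_n \in \Sigma_n$ and every $\omega \in \Sigma_{n+1}$ satisfy $C^{-1}\beta_n \le \diam(E_{\sigma|_n}) \le C\beta_n$ and $C^{-1}\beta_{n+1} \le \diam(E_\omega) \le C\beta_{n+1}$, the ratio $\log\diam(E_{\sigma|_n})/\log\diam(E_\omega)$ is squeezed between expressions of the form $(\pm\log C + \log\beta_n)/(\mp\log C + \log\beta_{n+1})$, which tend to $1$ uniformly in $\sigma$ because $\log\beta_n/\log\beta_{n+1} \to 1$ and $\log\beta_n \to -\infty$. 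Hence Lemma~\ref{moran_filtration} gives a sequence $(\delta_n)$ so that $\{\QQ_n\}$, with $\QQ_n = \{ E_\sigma : \diam(E_\sigma) \le \gamma_n < \diam(E_{\sigma^-}) \}$ and $\gamma_n = \min\{\diam(E_\sigma) : \sigma \in \Sigma_n\}$, is a general filtration of $E = \pi(\Sigma)$. Note $\pi\mu$ is a finite Borel regular (Radon) measure supported on $E$, so Proposition~\ref{result 5} yields
\begin{equation*}
  \dimcor(\pi\mu) = \liminf_{n\to\infty} \frac{\log \sum_{Q \in \QQ_n} (\pi\mu)(Q)^2}{\log\delta_n}.
\end{equation*}

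The second task is to translate this into the symbolic statement. Here \ref{M7} forces the levels to be comparable: since $\gamma_n = \min_{\sigma \in \Sigma_n}\diam(E_\sigma) \in [C^{-1}\beta_n, C\beta_n]$ and, more importantly, $C^{-1}\beta_n \le \diam(E_\sigma) \le C\beta_n$ for all $\sigma \in \Sigma_n$, one checks that any $E_\sigma \in \QQ_n$ has $|\sigma|$ within a bounded distance of $n$; concretely, $\diam(E_\sigma) \le \gamma_n \le C\beta_n$ while $\diam(E_\sigma) \ge C^{-1}\beta_{|\sigma|}$ forces $\beta_{|\sigma|} \le C^2\beta_n$, and similarly $\diam(E_{\sigma^-}) > \gamma_n \ge C^{-1}\beta_n$ together with $\diam(E_{\sigma^-}) \le C\beta_{|\sigma|-1}$ gives $\beta_{|\sigma|-1} > C^{-2}\beta_n$; since $\log\beta_m/\log\beta_{m+1}\to 1$ these two inequalities bound $|\sigma| - n$. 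A cleaner route, which I would prefer, is to bypass $\QQ_n$ entirely: by Lemma~\ref{correlation_lemmas}, $\dimcor(\pi\mu) = \liminf_{r\downarrow 0} \log\bigl(\int (\pi\mu)(B(x,r))\,\dd(\pi\mu)(x)\bigr)/\log r$, and one can directly sandwich this integral between multiples of $\sum_{\omega\in\Sigma_n}\mu([\omega])^2$ when $r \approx \beta_n$. For the lower bound on the integral, each $E_\omega$ with $\omega \in \Sigma_n$ has diameter $\le C\beta_n$, so $E_\omega \subset B(x, 2C\beta_n)$ for every $x \in E_\omega$, giving $\int (\pi\mu)(B(x,2C\beta_n))\,\dd(\pi\mu)(x) \ge \sum_{\omega\in\Sigma_n}\mu([\omega])^2$ (using $\mu([\omega]) \le (\pi\mu)(E_\omega)$ and disjointness of the $[\omega]$, summed over $\Sigma_n$). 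For the upper bound, one uses \ref{M4} as in the proof of Proposition~\ref{result 5}: the ball $B(x, 2C\beta_n)$ around a point of $E_\omega$ is contained in a controlled dilate of the ball $B(y_\omega, C_0\diam E_\omega) \subset E_\omega$, hence meets only boundedly many sets $E_{\omega'}$, $\omega' \in \Sigma_n$ (the doubling property bounds the cardinality via a $\delta$-packing argument with $\delta \asymp \beta_n$), and a Cauchy--Schwarz step of the form \eqref{eq:r5_estimate2} plus the dual cardinality bound \`a la $\DD_{Q'}$ converts $\sum_\omega \mu([\omega])\sum_{\omega'}\mu([\omega'])$ into $O(1)\sum_{\omega'}\mu([\omega'])^2$. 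Taking logs, dividing by $\log(2C\beta_n)$, using $\log(2C\beta_n)/\log\beta_n \to 1$, and letting $n\to\infty$ along the sequence forced by $\beta_{n+1} \le r < \beta_n$ (valid since $\log\beta_n/\log\beta_{n+1}\to 1$) pins down both directions of the $\liminf$.

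The main obstacle is the upper bound, specifically controlling the overlap among the construction sets $\{E_{\omega'} : \omega' \in \Sigma_n\}$ within a ball of radius $\asymp \beta_n$. The construction sets need not be disjoint and need not be balls, so the doubling-packing estimate cannot be applied to them directly; this is exactly where hypothesis \ref{M4} enters, providing interior balls $B(y_\omega, C_0\diam E_\omega)$ of radius comparable to $\beta_n$ that \emph{are} disjoint (because the $E_\omega$ are, as $\omega$ ranges over $\Sigma_n$ --- here one needs \ref{M3} plus \ref{M1} to see that distinct $\omega, \omega' \in \Sigma_n$ give disjoint $E_\omega, E_{\omega'}$) and which therefore form a legitimate packing to which the doubling bound applies. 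Once this combinatorial cardinality bound is in hand, the rest is the same Cauchy--Schwarz-and-reindex bookkeeping already carried out in the proof of Proposition~\ref{result 5}, now transported to the symbolic sums via the comparison $\mu([\omega]) \le (\pi\mu)(E_\omega)$ and $\sum_{\omega \in \Sigma_n}(\pi\mu)(E_\omega) \le (\text{bounded multiplicity}) \cdot \mu(\Sigma)$.
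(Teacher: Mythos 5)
Your proof is correct, but it does not follow the paper's route. The paper's entire proof is two sentences: \ref{M7} implies \ref{M5} and forces $C^{-1}\beta_n \le \gamma_n \le C\beta_n$, so the claim ``follows immediately'' from Lemma~\ref{moran_filtration} and Proposition~\ref{result 5}. You correctly sense that this is not quite immediate: the collection $\QQ_n$ produced by Lemma~\ref{moran_filtration} is a stopping-time cut-set whose elements $E_\sigma$ need not have $|\sigma|=n$, so one still has to match $\sum_{Q\in\QQ_n}\pi\mu(Q)^2$ with $\sum_{\omega\in\Sigma_n}\mu([\omega])^2$; your direct sandwich of $\int\pi\mu(B(x,2C\beta_n))\dd\pi\mu(x)$ against the level-$n$ symbolic sum, using \ref{M4} for the interior balls, \ref{M1} and \ref{M3} for their disjointness, and the packing/Cauchy--Schwarz bookkeeping of Proposition~\ref{result 5}, fills that gap and is sound. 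Two remarks. First, your fallback claim in the first route --- that $E_\sigma\in\QQ_n$ forces $|\sigma|$ to lie within a \emph{bounded} distance of $n$ --- does not follow from $\log\beta_m/\log\beta_{m+1}\to 1$ alone (take $\beta_m = 2^{-\sqrt m}$); what your inequalities actually give is only $\log\beta_{|\sigma|}/\log\beta_n\to 1$, which happens to suffice for the $\liminf$, but since you abandon this route it is harmless. Second, your ``cleaner route'' can be made cleaner still: under \ref{M1}--\ref{M4} and \ref{M7} the collections $\QQ_n'=\{E_\omega : \omega\in\Sigma_n\}$ already satisfy (F1)--(F4) with $\gamma_n'=C\beta_n$ and $\delta_n'=\min\{C_0C^{-1},C\}\beta_n$ (the sets are disjoint Borel sets by \ref{M1} and \ref{M3}, contain balls of radius $C_0C^{-1}\beta_n$ by \ref{M4}, and $\beta_n\to 0$ by \ref{M2}), so $\{\QQ_n'\}_{n\in\N}$ is itself a general filtration of $E$ and Proposition~\ref{result 5} applies verbatim, with $\pi\mu(E_\omega)=\mu([\omega])$ following from $\pi^{-1}(E_\omega)=[\omega]$; this spares you from re-running the packing argument. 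That last identity $\pi\mu(E_{\omega})=\mu([\omega])$ (not just the inequality $\ge$) is also the one missing line in your upper bound, where you need to pass from $\sum\pi\mu(E_{\omega'})^2$ to $\sum\mu([\omega'])^2$.
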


\begin{proof}
  Observe that \ref{M7} clearly implies \ref{M5}. Furthermore, it also guarantees that $C^{-1}\beta_n \le \gamma_n \le C\beta_n$ for all $n \in \N$, where $\gamma_n$ is as in Lemma \ref{moran_filtration}. Therefore, the claim follows immediately from Lemma \ref{moran_filtration} and Proposition \ref{result 5}.
\end{proof}

We will next start studying local dimensions of a measure. For that we need the following separation condition. Given a Moran construction $\{ E_\sigma : \sigma \in \Sigma_{*} \}$, we set
\begin{equation*}
  N(x,r) = \{ \sigma \in \Sigma_{\ast} : \diam(E_\sigma) \leq r <
  \diam(E_{\sigma^{-}})
  \text{ and } E_\sigma \cap B(x,r) \ne \emptyset \}
\end{equation*}
for all $x\in E$ and $r> 0$.
We say that the Moran construction satisfies the \emph{finite
clustering property} if
\begin{equation*}
  \sup_{x \in E} \limsup_{r \downarrow 0} \# N(x,r) < \infty.
\end{equation*}
Observe that a Moran construction on a compact doubling metric space satisfying \ref{M3}, \ref{M4}, and \ref{M6} satisfies the finite clustering property. For a self-conformal set, the finite clustering property is equivalent to the open set condition; see e.g.\ \cite[Corollary 5.8]{KV08}. For more examples, the reader is referred to \cite{KR15}. Also, Moran sets considered in \cite{W00} (with $c_* > 0$) satisfy the finite clustering property.

The following proposition plays an important role in the proof of Theorem
\ref{result 8}.

\begin{proposition}\label{result 7}
Suppose that $X$ and $Y$ are complete separable metric spaces and
$f\colon Y \to X$ is continuous. If $\mu$ is a locally finite
Borel measure with compact support on $Y$ and  $A \subset Y$ is
such that $\mu(A)>0$, then
\begin{align*}
  \ldimloc(f\mu,f(y)) &= \ldimloc(f(\mu|_A),f(y)), \\
  \udimloc(f\mu,f(y)) &= \udimloc(f(\mu|_A),f(y)),
\end{align*}
for $\mu$-almost all $y \in A$.
\end{proposition}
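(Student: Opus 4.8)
The plan is to compare the pushforward measures $f\mu$ and $f(\mu|_A)$ by noting that they agree up to a measure supported on the ``bad'' image set, and then to show that this bad set is negligible from the point of view of local dimensions $\mu$-almost everywhere on $A$. Write $\nu = f\mu$, $\nu_A = f(\mu|_A)$, and $\nu_{A^c} = f(\mu|_{Y\setminus A})$, so that $\nu = \nu_A + \nu_{A^c}$. Since $\nu_A \le \nu$, the trivial monotonicity of the ball measures gives, for every $y$, the inequalities $\ldimloc(\nu,f(y)) \le \ldimloc(\nu_A,f(y))$ and $\udimloc(\nu,f(y)) \le \udimloc(\nu_A,f(y))$ (larger measure forces the log-ratio to be no larger, since $\log r < 0$). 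So the content is entirely in the reverse inequalities, and for those it suffices to control the contribution of $\nu_{A^c}$ to $\nu(B(f(y),r))$ for $\mu$-a.e.\ $y \in A$.

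The key step is a density/decomposition argument at the level of $\Sigma$ (or $Y$) rather than $X$. First I would reduce to the case where $A$ is closed: by inner regularity of the Radon measure $\mu$, choose an increasing sequence of compact sets $A_k \subset A$ with $\mu(A \setminus A_k) \to 0$; it suffices to prove the statement with $A_k$ in place of $A$ for each $k$, since $\bigcup_k A_k$ exhausts $A$ up to $\mu$-null sets and the two one-sided bounds on each $A_k$ assemble to the claim on $A$. So assume $A$ is compact. Then I want to say that for $\mu$-a.e.\ $y \in A$, the point $f(y)$ is ``far'' from $f(Y \setminus A)$ on a sequence of scales, or more precisely that $\nu_{A^c}(B(f(y),r))$ is eventually dominated by $\nu_A(B(f(y),r))$. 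This is where one invokes a Lebesgue-density-type statement: for $\mu$-a.e.\ $y \in A$ the ``density'' of $Y \setminus A$ near $y$ vanishes. Concretely, fix the compact support $K = \spt\mu$; the function $y \mapsto \liminf_{r\downarrow 0}\mu((Y\setminus A)\cap B(y,r))/\mu(B(y,r))$ is $0$ for $\mu$-a.e.\ $y \in A$ by the Lebesgue differentiation theorem for Radon measures on a metric space (valid here because $Y$ is complete separable, so one may embed and use a Besicovitch/Vitali covering argument, or simply cite the standard density theorem). Pushing forward, $\nu_{A^c}(B(f(y),r)) = \mu\bigl((Y\setminus A)\cap f^{-1}(B(f(y),r))\bigr) \le \mu\bigl((Y\setminus A)\cap B(y,\diam \text{ of fibre nbhd})\bigr)$ — and here I must be careful, since $f^{-1}(B(f(y),r))$ need not be a small ball around $y$.

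The main obstacle, and the delicate point of the whole proposition, is precisely that $f$ need not be injective, so the preimage of a small ball around $f(y)$ in $X$ can be large in $Y$; the naive statement ``$f(y)$ is a density point of $f(A)$'' can simply fail. The fix is to work with the set $A' = f^{-1}(f(A)) \supset A$ and observe that $\nu_{A^c}$ is supported on $f(Y \setminus A') \subset f(Y)\setminus f(A)$ only on the part of $Y\setminus A$ that also leaves $A'$; the part of $\mu|_{Y\setminus A}$ living on $A'\setminus A$ pushes into $f(A)$ and genuinely can contaminate. So one cannot get a pointwise-in-$X$ comparison; instead the argument must be run with the decomposition $\mu = \mu|_A + \mu|_{A'\setminus A} + \mu|_{Y\setminus A'}$, handling the last summand by the density argument (its image is disjoint from $\overline{f(A)}$ up to a null set, or at least has vanishing relative density at $\mu$-a.e.\ point of $A$), and absorbing the middle summand by noting that for the lower and upper local dimension we only need the comparison to hold at $\mu$-a.e.\ $y\in A$ and along a suitable sequence of radii, invoking the density theorem in $Y$ for the set $Y \setminus A$ directly: $\mu((Y\setminus A)\cap B(y,r)) = o(\mu(B(y,r)))$, and then relating $\mu(B(y,r))$ to $\nu(B(f(y),cr))$ via a doubling-type or direct covering estimate. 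In short, I expect the heart of the proof to be: (i) Lebesgue density theorem in $(Y,\mu)$ giving $\mu((Y\setminus A)\cap B(y,r))/\mu(B(y,r)) \to 0$ for $\mu$-a.e.\ $y\in A$; (ii) the elementary inclusion $B(y,r) \subset f^{-1}(B(f(y),r))$ (as $f$ is $1$-Lipschitz? no — $f$ is merely continuous, so even this may fail and one must instead use continuity to get, for each $y$ and each $\eps$, a radius $\rho(y,\eps)$ with $B(y,\rho) \subset f^{-1}(B(f(y),\eps))$, which is enough along sequences); and (iii) the monotone bound $\nu_A(B(f(y),r)) \le \nu(B(f(y),r)) \le \nu_A(B(f(y),r)) + \mu((Y\setminus A)\cap f^{-1}(B(f(y),r)))$, combined with (i)–(ii) to conclude $\log\nu(B(f(y),r)) = \log\nu_A(B(f(y),r)) + O(1)$ along the relevant scales, whence the two sets of local dimensions coincide. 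The careful bookkeeping of ``which scales'' and the non-injectivity of $f$ is the part I expect to require the most attention.
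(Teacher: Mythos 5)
Your proposal has a genuine gap --- in fact two. First, the Lebesgue density theorem you invoke in step (i), namely $\mu((Y\setminus A)\cap B(y,r))/\mu(B(y,r))\to 0$ for $\mu$-a.e.\ $y\in A$, is not available in this generality: it rests on a Besicovitch-type covering theorem, which fails in general complete separable (even compact doubling) metric spaces. The paper makes exactly this point in the remark immediately following the proposition, citing \cite[Example 5.6]{KRS12}: the density point property can fail even when the space is doubling, and Proposition \ref{result 7} is designed precisely as a substitute for it, so a proof that presupposes it is circular in the worst case and simply unavailable in the best. Second, even granting a density statement in $Y$, you correctly identify but never overcome the fibre problem: since $f$ is merely continuous and not injective, $f^{-1}(B(f(y),r))$ may contain points of $Y\setminus A$ arbitrarily far from $y$, so no bound on $\mu((Y\setminus A)\cap B(y,\rho))$ controls $\mu((Y\setminus A)\cap f^{-1}(B(f(y),r)))$. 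Your decomposition via $A'=f^{-1}(f(A))$ does not resolve this --- you concede that the term $\mu|_{A'\setminus A}$ ``genuinely can contaminate'' and offer no argument for absorbing it, yet that term is the entire difficulty. Step (iii) likewise leans on a doubling-type comparison between balls in $Y$ and balls in $X$ for which there is no basis under the stated hypotheses.

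The paper's proof sidesteps both obstacles by working entirely on the image side with the elementary $5r$-covering theorem, which, unlike Besicovitch, holds in every metric space. Arguing by contradiction, one assumes $\ldimloc(f\mu,f(y))+\gamma<\ldimloc(f(\mu|_A),f(y))$ on a bounded set $A'\subset A$ of positive measure; the definitions of the two lower local dimensions then produce, for each $y\in A'$, arbitrarily small radii $r$ with $\mu(A\cap f^{-1}(B(f(y),5r)))<(5r)^\gamma f\mu(B(f(y),r))$. Extracting a countable disjoint subfamily $\BB_\roo$ of such balls of radius less than $\roo$, contained in an open set $U$ of finite $f\mu$-measure, whose $5$-fold dilates cover $f(A')$, and summing, gives
\begin{equation*}
  \mu(A') \le \sum_{B \in \BB_\roo} \mu(A \cap f^{-1}(5B)) < \sum_{B \in \BB_\roo} (5\roo)^\gamma f\mu(B) \le (5\roo)^\gamma f\mu(U) \to 0
\end{equation*}
as $\roo \downarrow 0$, a contradiction. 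This covering step on the image side is the idea your outline is missing; without it, neither the density theorem nor the fibre bookkeeping can be made to work.
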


\begin{proof}
  Clearly always $\ldimloc(f\mu,f(y)) \le \ldimloc(f(\mu|_A),f(y))$. Let us assume that there are a bounded set $A' \subset A$ and $\gamma>0$ such that
  \begin{equation*}
    \ldimloc(f\mu,f(y)) + \gamma < \ldimloc(f(\mu|_A),f(y))
  \end{equation*}
  for all $y \in A'$. Then for every $y \in A'$ there is a sequence $r_j \downarrow 0$ such that
  \begin{equation*}
    \mu(A \cap f^{-1}(B(f(y),5r_j))) < (5r_j)^\gamma f\mu(B(f(y),r_j))
  \end{equation*}
  for all $j \in \N$. Since $f\mu$ is a Radon measure we find an open set $U \subset X$ with $U \supset f(A')$ and $f\mu(U)<\infty$. Let $\roo>0$ and, by relying on the $5r$-covering theorem (see e.g.\ \cite[Theorem 1.2]{H01}), choose a countable disjoint subcollection $\BB_\roo$ of
  \begin{align*}
    \{ B(f(y),r) : \;&y \in A' \text{ and } 0<r<\roo \text{ such that} \\ &B(f(y),r) \subset U \text{ and } \mu(A \cap f^{-1}(B(f(y),5r))) < (5r)^\gamma f\mu(B(f(y),r)) \}
  \end{align*}
  such that $5\BB_\roo$ covers $f(A')$. Since now
  \begin{equation*}
    \mu(A') \le \sum_{B \in \BB_\roo} \mu(A \cap f^{-1}(5B)) < \sum_{B \in \BB_\roo} (5\roo)^\gamma f\mu(B) \le (5\roo)^\gamma f\mu(U)
  \end{equation*}
  the claim follows by letting $\roo \downarrow 0$. The proof for the upper local dimension is similar and thus omitted.
\end{proof}

\begin{remark}
  We say that a measure $\mu$ on $X$ has the \emph{density point property} if
  \begin{equation*}
    \lim_{r \downarrow 0} \frac{\mu(A \cap B(x,r))}{\mu(B(x,r))} = 1
  \end{equation*}
  for $\mu$-almost all $x \in A$ whenever $A \subset X$ is $\mu$-measurable. It was demonstrated in \cite[Example 5.6]{KRS12} that the density point property is not necessarily valid for all measures even if the space $X$ is doubling. Despite of this, Proposition \ref{result 7} implies that
  \begin{align*}
    \ldimloc(\mu,x) &= \ldimloc(\mu|_A,x), \\
    \udimloc(\mu,x) &= \udimloc(\mu|_A,x),
  \end{align*}
  for $\mu$-almost all $x \in A$.
\end{remark}

We will next show that under the finite clustering property, the lower local
dimension of the pushforward measure can be obtained symbolically. This result
generalizes \cite[Lemma 4.2]{KLS} and \cite[Proposition 3.10]{KR15}. The proof of \cite[Lemma 4.2]{KLS} used \ref{M5} and assumed that the construction sets of the same level were disjoint and had a certain shape. In \cite[Proposition 3.10]{KR15}, it was assumed that the Moran construction is so rigid that
\begin{labeledlist}{M}
  \setcounter{enumi}{7}
  \item there exists $D \ge 1$ such that $\diam(E_{\iii\jjj}) \le D\diam(E_\iii) \diam(E_\jjj)$ for all $\iii\jjj \in \Sigma_∗$. \label{M8}
\end{labeledlist}
Although constructions given by iterated function systems satisfy \ref{M8}, it rules out many interesting Moran constructions. The proof also used uniform version of the finite clustering property. In the main part of the following theorem, we do not require any of these assumptions.

\begin{thm}\label{result 8}
  If $\{ E_\sigma : \sigma \in \Sigma_* \}$ is a Moran construction on a complete separable metric space satisfying \ref{M1}, \ref{M2}, and the finite clustering property, and $\mu$ is a finite Borel regular measure on $\Sigma$, then
  \begin{align}
    \ldimloc(\pi\mu,\pi(\sigma)) &= \liminf_{n \to
    \infty} \frac{\log \mu([\sigma|_n])}{\log \diam(E_{\sigma|_n})}, \label{eq:localdim} \\
    \udimloc(\pi\mu,\pi(\sigma)) &\geq \limsup_{n \to
    \infty} \frac{\log \mu([\sigma|_n])}{\log \diam(E_{\sigma|_n})}, \label{eq:localdim2}
  \end{align}
  for $\mu$-almost all $\sigma \in \Sigma$. Furthermore, if the Moran construction also satisfies \ref{M5}, then \eqref{eq:localdim2} holds with an equality.
\end{thm}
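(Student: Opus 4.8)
The plan is to prove the two displayed formulas for $\mu$-almost every $\sigma$, then upgrade \eqref{eq:localdim2} to an equality under \ref{M5}. Throughout, write $x = \pi(\sigma)$ and $r_n = \diam(E_{\sigma|_n})$, so that $r_n \downarrow 0$ by \ref{M2}. The easy direction is the inequality $\ldimloc(\pi\mu,\pi(\sigma)) \le \liminf_n \log\mu([\sigma|_n])/\log r_n$ and similarly $\udimloc(\pi\mu,\pi(\sigma)) \ge \limsup_n \log\mu([\sigma|_n])/\log r_n$: here one simply observes that $E_{\sigma|_n} \subset B(x, r_n)$ (since $x \in E_{\sigma|_n}$ and $\diam(E_{\sigma|_n}) = r_n$), hence $\pi\mu(B(x,r_n)) \ge \mu(\pi^{-1}(E_{\sigma|_n})) \ge \mu([\sigma|_n])$, and one compares $\log$'s along the subsequence $r_n$. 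Because $r_n$ is (roughly, after \ref{M2}) comparable to a general scale, a standard squeezing argument — picking, for arbitrary $r>0$, the index $n$ with $r_{n+1} \le r < r_n$ — converts the subsequential statement into the full $\liminf$/$\limsup$. Note this direction needs no separation hypothesis at all. The inequality \eqref{eq:localdim2} as stated is exactly this easy half, so \eqref{eq:localdim2} requires nothing further.

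The substance is the reverse inequality in \eqref{eq:localdim}, namely $\ldimloc(\pi\mu,\pi(\sigma)) \ge \liminf_n \log\mu([\sigma|_n])/\log r_n$, and for this the finite clustering property is the crucial tool. Fix $r>0$ small; I want to bound $\pi\mu(B(x,r))$ from above by a controlled sum of cylinder measures $\mu([\omega])$ with $\omega \in N(x,r)$. The set $N(x,r)$ is the collection of those $\omega$ that are ``first to be smaller than $r$'' and meet $B(x,r)$; the construction sets $\{E_\omega : \omega \in N(x,r)\}$ together cover $E \cap B(x,r)$ — indeed any point of $E \cap B(x,r)$ lies in some infinite branch, and along that branch the diameters shrink past $r$, so some initial segment $\omega$ lands in $N(x,r)$ with $E_\omega$ meeting $B(x,r)$. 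Consequently $\pi^{-1}(B(x,r) \cap E) \subset \bigcup_{\omega \in N(x,r)} [\omega]$ (up to a $\mu$-null ambiguity, since distinct $[\omega]$ may overlap only if the $E$'s are non-disjoint, but the $\omega \in N(x,r)$ are pairwise incomparable so the cylinders are genuinely disjoint), giving
\begin{equation*}
  \pi\mu(B(x,r)) \le \sum_{\omega \in N(x,r)} \mu([\omega]) \le \#N(x,r) \cdot \max_{\omega \in N(x,r)} \mu([\omega]).
\end{equation*}
By the finite clustering property there is $M < \infty$ and, for $\mu$-a.e.\ $\sigma$ — here one uses that for the relevant $\sigma$ and all small $r$, $\sigma|_{|\omega|} $ for the maximizing $\omega$ is itself ``close to $\sigma$'' — with $\#N(x,r) \le M$ for all small $r$. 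The key remaining point is to relate the maximizing $\omega \in N(x,r)$ back to an initial segment $\sigma|_n$ of $\sigma$ itself. This is where I expect the main obstacle: $\omega$ need not be of the form $\sigma|_n$. The resolution, following the spirit of \cite{KLS, KR15}, is a covering/measure-differentiation argument: apply Proposition~\ref{result 7} (or its consequence in the subsequent remark) to reduce to sets where one controls the overlap, and then exploit that for $\mu$-a.e.\ $\sigma$ the cylinders realizing the $\limsup$ of $\mu(B(x,r))$ can be taken along $\sigma$ — i.e.\ use a maximal-function / Borel–Cantelli argument to show that the $\omega$'s appearing as maximizers differ from $\sigma|_n$ in a way that does not affect the logarithmic ratio. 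Concretely: if $\liminf_n \log\mu([\sigma|_n])/\log r_n =: t$, fix $s < t$; then $\mu([\sigma|_n]) \le r_n^s$ for all large $n$, and by the finite clustering bound the same holds (with $s$ replaced by anything $<s$, absorbing the constant $M$) for every $\omega \in N(x,r)$ — because $E_\omega \cap B(x,r) \ne\emptyset$ forces $\diam(E_\omega) \le r$ and, after a mild argument using \ref{M2} to compare $\diam(E_\omega)$ with $\diam(E_{\sigma|_{|\omega|}})$, one gets $\mu([\omega]) \le r^{s - \eps}$. Summing over the at most $M$ such $\omega$ yields $\pi\mu(B(x,r)) \le M r^{s-\eps}$, hence $\ldimloc(\pi\mu,x) \ge s - \eps$, and letting $s \uparrow t$, $\eps \downarrow 0$ gives the claim. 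The delicate part is legitimizing ``$\mu([\omega]) \le r^{s-\eps}$ for $\omega \in N(x,r)$'' for $\mu$-a.e.\ $\sigma$ uniformly in small $r$; I would handle it by an Egorov/Borel–Cantelli passage (as in Remark~\ref{correlation_remark}(1)) combined with Proposition~\ref{result 7}.

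Finally, for the equality in \eqref{eq:localdim2} under \ref{M5}: I have already proved ``$\ge$''. For ``$\le$'' I need $\pi\mu(B(x,r))$ bounded \emph{below} by something forcing $\udimloc(\pi\mu,x) \le \limsup_n \log\mu([\sigma|_n])/\log r_n =: u$. Fix $s > u$; then along a subsequence $n_k$, $\mu([\sigma|_{n_k}]) \ge r_{n_k}^s$. Condition \ref{M5} says $\log r_{n}$ and $\log(\text{smallest child diameter of } E_{\sigma|_n})$ are asymptotically equal, uniformly; this lets one interpolate between consecutive levels, so that for \emph{every} small $r$ (choosing $n$ with $r_{n+1} \le r < r_n$ along indices near the subsequence) one has $\pi\mu(B(x,r)) \ge \mu([\sigma|_{n}]) \ge r_n^{s} \ge r^{s(1+o(1))}$, because $\log r_{n+1}/\log r_n \to 1$ is precisely what \ref{M5} delivers. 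Hence $\log\pi\mu(B(x,r))/\log r \le s(1+o(1))$ along that sequence of $r$'s, so $\udimloc(\pi\mu,x) \le s$; letting $s \downarrow u$ finishes. The only subtlety is that \ref{M5} is stated in terms of children of $\sigma|_n$, so one must verify $\log r_{n+1}/\log r_n \to 1$ follows from it — which it does, since $\diam(E_{\sigma|_{n+1}}) \ge \min\{\diam(E_\omega) : \omega^- = \sigma|_n\}$ and $\diam(E_{\sigma|_{n+1}}) \le \diam(E_{\sigma|_n})$ by \ref{M1}, sandwiching $\log r_{n+1}$ between $\log r_n$ and the quantity in \ref{M5}.
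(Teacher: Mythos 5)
Your overall architecture matches the paper's: the easy halves via $E_{\sigma|_n}\subset B(\pi(\sigma),\diam(E_{\sigma|_n}))$, the covering of $B(x,r)\cap E$ by $\{E_\omega : \omega\in N(x,r)\}$ together with the finite clustering bound $\#N(x,r)\le M$ for the hard half, an Egorov step, and the interpolation between consecutive levels under \ref{M5} for the equality in \eqref{eq:localdim2}. The easy halves and the \ref{M5} part are correct as written (your sandwich showing $\log\diam(E_{\sigma|_{n+1}})/\log\diam(E_{\sigma|_n})\to 1$ is exactly what is needed).

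However, the central step of the hard direction has a genuine gap. You need $\mu([\omega])\lesssim r^{s}$ for every $\omega\in N(x,r)$, but the only estimate Egorov provides is $\mu([\omega|_n])<\diam(E_{\omega|_n})^s$ for $\omega$ \emph{in the Egorov set} $A$; a cylinder $[\omega]$ with $\omega\in N(x,r)$ that contains no point of $A$ can carry arbitrarily large measure, and your proposed remedy --- ``a mild argument using \ref{M2} to compare $\diam(E_\omega)$ with $\diam(E_{\sigma|_{|\omega|}})$'' --- cannot repair this, because the obstruction is the \emph{measure} of off-branch cylinders, not their diameters (and \ref{M1}--\ref{M2} give no comparison between distinct branches in any case). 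The paper's resolution, which you name as a tool but do not execute, is to run the whole estimate for the \emph{restricted} measure $\mu|_A$: then $\pi(\mu|_A)(B(\pi(\sigma),r))\le\sum_{\omega\in N(\pi(\sigma),r)}\mu|_A([\omega])$, the summands vanish unless $[\omega]\cap A\ne\emptyset$, and in that case $\omega$ is an initial segment of a point of $A$, so the Egorov bound applies and gives $\mu([\omega])<\diam(E_\omega)^s\le r^s$ (taking also $r<\min\{\diam(E_\tau):\tau\in\Sigma_{n_0}\}$ to guarantee $|\omega|>n_0$). This yields $\ldimloc(\pi(\mu|_A),\pi(\sigma))\ge s$ on $A$, and only then does Proposition \ref{result 7} enter: it transfers the bound from $\pi(\mu|_A)$ back to $\pi\mu$ for $\mu$-a.e.\ $\sigma\in A$, contradicting (in the paper's contrapositive set-up) the assumption $\ldimloc(\pi\mu,\pi(\sigma))<s$. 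Without this restriction-then-transfer mechanism the inequality ``$\mu([\omega])\le r^{s-\eps}$ for all $\omega\in N(x,r)$'' is simply false, so the proof as proposed does not close.
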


\begin{proof}
We may clearly assume that $\mu$ has no atoms.
Since $E_{\sigma|_n} \subset
B(\pi(\sigma),\diam(E_{\sigma|_n}))$ and $\mu([\sigma|_n])
\le \pi\mu(E_{\sigma|_n})$ for all $\sigma \in \Sigma$ and $n \in
\mathbb{N}$, the right-hand side of \eqref{eq:localdim} is an upper
bound for the lower local dimension. Furthermore, if $\eps > 0$, then \ref{M5} implies that there exists $n_0 \in \N$ such that $\diam(E_{\sigma|_n}) < \diam(E_{\sigma|_{n+1}})^{1-\eps}$. Thus
\begin{equation*}
  \frac{\log \pi\mu(B(\pi(\sigma),r))}{\log r} \le \frac{\log \pi\mu(B(\pi(\sigma),\diam(E_{\sigma|_{n+1}})))}{\log \diam(E_{\sigma|_n})} \le \frac{\log \mu([\sigma|_{n+1}])}{(1-\eps)\log \diam(E_{\sigma|_{n+1}})}
\end{equation*}
for all $r>0$ and $n \ge n_0$ with $\diam(E_{\sigma|_{n+1}}) < r \le \diam(E_{\sigma|_n})$. This shows that, assuming \ref{M5},  the right-hand side of \eqref{eq:localdim2} is an upper bound for the upper local dimension.

To show that the right-hand side of \eqref{eq:localdim} is also an lower bound, suppose to the contrary that there is a set $A' \subset E$
with $\mu(A')>0$ and $s>0$ such that
\begin{equation*}
  \ldimloc(\pi\mu,\pi(\sigma)) < s < \liminf_{n \to \infty} \frac{\log \mu([\sigma|_n])}{\log \diam(E_{\sigma|_n})}
\end{equation*}
for all $\sigma \in A'$. By Egorov's theorem, the limit above is
uniform in a set $A \subset A'$ with $\mu(A)>0$: there is $n_0 \in
\mathbb{N}$ such that
\begin{equation} \label{eq:estimate1}
  \mu([\omega|_n]) < \diam(E_{\omega|_n})^s
\end{equation}
for all $\omega \in A$ and $n \ge n_0$.

Let $\sigma \in A$ and, relying on the finite clustering property,
choose $M \in \mathbb{N}$ and $r_0>0$ such that $\# N(\pi(\sigma),r)
\le M$ for all $0<r<r_0$. We trivially have
\begin{equation*}
  \pi(\mu|_A)(B(\pi(\sigma),r)) \le \sum_{\omega \in
  N(\pi(\sigma),r)} \mu|_A([\omega])
\end{equation*}
for all $0<r<r_0$.
Observe that if $\omega \in \Sigma_*$ satisfies $\diam(E_\omega) <
\min\{ \diam(E_\tau): \tau \in \Sigma_{n_0} \}$, then $|\omega|
> n_0$. Therefore, assuming $r_0 < \min\{\diam(E_\tau) : \tau \in
\Sigma_{n_0} \}$, we may apply the estimate \eqref{eq:estimate1} for
each $\omega \in N(\pi(\sigma),r)$ whenever $[\omega] \cap A \ne
\emptyset$ and $0<r<r_0$. Thus
\begin{equation*}
  \pi(\mu|_A)(B(\pi(\sigma),r)) \le \sum_{\omega \in
  N(\pi(\sigma),r)} \diam(E_\omega)^s \le Mr^s
\end{equation*}
and, consequently,
$\ldimloc(\pi(\mu|_A),\pi(\sigma)) \ge s >
\ldimloc(\pi\mu,\pi(\sigma))$ for all $\sigma
\in A$. This contradicts with Proposition \ref{result 7}.
The proof of \eqref{eq:localdim2} is similar and thus omitted.
\end{proof}

\begin{remark}
  It would be interesting to know if the inequality in \eqref{eq:localdim2} can be strict under the assumptions of Theorem \ref{result 8}. We remark that a non-uniform version of \ref{M5} suffices for the equality.
\end{remark}

\begin{example}
  For each $j \in \N$ let $I_j$ be a finite set, $N_j = \# I_j$, and $\Sigma = \prod_{j=1}^\infty I_j$. Let $p = (p_{j1},\ldots,p_{jN_j})$ be a positive probability vector for all $j \in \N$ and $\mu$ the probability measure for which $\mu([\sigma]) = \prod_{j=1}^n p_{ji_j}$ for all $\sigma = i_1\cdots i_n \in \Sigma_n$ and $n \in \N$. We assume that $\{ E_\sigma : \sigma \in \Sigma_* \}$ is a Moran construction on a compact doubling metric space satisfying \ref{M3}, \ref{M4}, \ref{M7}, and the finite clustering property. It follows from Theorem \ref{result 8} that
  \begin{equation*}
    \ldimloc(\pi\mu,\pi(\sigma)) = \liminf_{n \to \infty} \frac{\sum_{j=1}^n \log p_{ji_j}}{\log\beta_n}
  \end{equation*}
  for $\mu$-almost all $\sigma = i_1i_2\cdots \in \Sigma$. On the other hand, since $\sum_{\sigma \in \Sigma_n} \mu([\sigma])^2 = \prod_{j=1}^n \sum_{i=1}^{N_j} p_{ji}^2$ for all $n \in \N$, Corollary \ref{thm:moran-cor} implies that
  \begin{equation*}
    \dimcor(\pi\mu) = \liminf_{n \to \infty} \frac{\sum_{j=1}^n \log\sum_{i=1}^{N_j} p_{ji}^2}{\log\beta_n}.
  \end{equation*}
  In particular, if the measure $\mu$ is the uniform distribution, that is, $p_{ji} = N_j^{-1}$ for all $i \in I_j$ and $j \in \N$, then $\dimcor(\pi\mu) = \ldimh(\pi\mu)$.
\end{example}

\end{document}